\newcommand{\mult}{\textrm{mult}}
\newcommand{\coeff}[2]{\left[ #1  \right]_{#2}}
\newcommand{\ep}{\varepsilon}
\newcommand{\Jac}{\textrm{Jac}}
\newcommand{\val}{\textrm{val}}
\newcommand\restr[2]{{
	\left.\kern-\nulldelimiterspace
	#1
	\vphantom{\big|}
	\right|_{#2}
	}}
\newsavebox{\overlongequation}
\newenvironment{centerlongequation}
 {\begin{displaymath}\begin{lrbox}{\overlongequation}$\displaystyle}
 {$\end{lrbox}\makebox[0pt]{\usebox{\overlongequation}}\end{displaymath}}
\theoremstyle{plain}
\newtheorem{thm}{Theorem}[section]
\newtheorem{lem}[thm]{Lemma}
\theoremstyle{definition}
\theoremstyle{remark}
\newtheorem{rem}[thm]{Remark}
\title{Explicit Rational Group Law on Hyperelliptic Jacobians of any Genus}
\author{David Urbanik}
\begin{document}
\maketitle

\begin{abstract}
It is well-known that abelian varieties are projective, and so that there exist explicit polynomial and rational functions which define both the variety and its group law. It is however difficult to find any explicit polynomial and rational functions describing these varieties or their group laws in dimensions greater than two. One exception can be found in Mumford's classic ``Lectures on Theta'', where he describes how to obtain an explicit model for hyperelliptic Jacobians as the union of several affine pieces described as the vanishing locus of explicit polynomial equations. In this article, we extend this work to give explicit equations for the group law on a dense open set. One can view these equations as generalizations of the usual chord-based group law on elliptic curves.
\end{abstract}

\section{Introduction}

Abelian varieties and their equations have long attracted interest in arithmetic geometry. Although it is known that equations describing these varieties must exist, and their nature has received some study\cite{oneqdef1, oneqdef2, oneqdef3}, it is in general believed to be impractical or infeasible to write such equations down. This attitude is perhaps best summarized in Milne's notes\cite{milneAV} on the subject, where he writes ``In general, it is not possible to write down explicit equations for an abelian variety of dimension $>$ 1, and if one could, they would be too complicated to be of use.'' 

A brief look at the literature on the matter seems to justify this outlook. For instance, the paper of Flynn\cite{flynngenus2eqs} gives a general set of equations for genus two Jacobians over an arbitrary ground field; there are $72$ equations in total, listed in an appendix, which describe these Jacobians as projective subvarieties of a $15$-dimensional projective space. A follow-up paper from Flynn\cite{flynngenus2gplaw} describes the group law, the equations of which he describes as ``too large to be written down,'' and instead focuses on methods to compute specializations of the group law for tasks such as point-doubling or the addition of fixed points of low order. Related work by Grant\cite{grant} gives a simpler set of defining equations in $8$-dimensional projective space, but at the cost of some generality.\footnote{Although, in fairness to Grant, our work makes similar assumptions.} In both cases, the authors remark that portions of their exposition required computer verification, as the algebraic expressions involved are too complicated to be reliably manipulated by hand.

One of the difficulties that arises in these approaches is that the usual methods for embedding genus $g$ Jacobians into $n$-dimensional projective spaces tend to result in an exponential dependence of $n$ on $g$, with $n = 3^{g} - 1$ and $n = 4^{g} - 1$ being common (as in the case for $g = 2$ above). This ensures that finding explicit equations via this strategy must necessarily be impractical for large $g$. An alternative approach, which we pursue in this paper, is to give explicit equations for Jacobians and their group law affine-locally, and construct the full Jacobian by gluing of charts. For hyperelliptic curves, the Jacobian variety itself is described in this manner by Mumford\cite{lecontheta2}, with the affine-local pieces utilizing affine spaces of dimension $3g + 1$, and hence with the number of parameters depending only linearly on $g$. In this paper, we show how to extend this construction to give explicit equations for the group law.

Our work is inspired by the paper of Leitenberger\cite{lieten} and the paper of Costello and Lauter\cite{costlaut}, both of which essentially carry out this approach in the case $g = 2$. Our methods can be viewed as a substantial generalization of their work.

\section{Algebraic Construction of Hyperelliptic Jacobians}
\label{jacSection}

In this section, we review the construction of hyperelliptic Jacobians that appears in Mumford's Lectures on Theta\cite{lecontheta2} and set notation. We consider hyperelliptic curves $C$ defined over an algebraically closed field $\mathbbm{k}$ with $\textrm{char} \hspace{0.1em} \mathbbm{k} \neq 2$ by two equations of the form
\begin{align*}
C_{1} : y^2 &= f(x) = f_{2g+1} x^{2g+1} + f_{2g} x^{2g} + \cdots + f_{0} \\
C_{2} : s^2 &= h(t) = t(h_{2g+1}t^{2g+1} + h_{2g} t^{2g} + \cdots + h_{0})
\end{align*}
glued along the morphism which makes the identifications $x = 1/t$ and $y = s/t^{g+1}$. We require that $f(x)$ has non-vanishing discriminant, and that $f$ is monic.\footnote{For the formulas we will develop, it will be useful to consider all the coefficients of $f$ on ``equal footing,'' which is why we give the $x^{2g+1}$ coefficient a distinct label despite the fact that we will always assume it is equal to $1$.} Note that the equation $C_{2}$ completes the curve defined by $C_{1}$ by adding a single ``infinite'' point corresponding to $(t, s) = (0, 0)$. Note also that $h_{i} = f_{(2g+1)-i}$. We will work with the equation $C_{1}$, and refer to the point $(t, s) = (0, 0)$ by the symbol $\infty$. We define the \emph{hyperelliptic involution} to be the map $\iota : C \to C$ determined by $(x, y) \mapsto (x, -y)$. If $P$ is a point on $C$, then $\iota(P)$ is deemed its \emph{conjugate}.

One may check that the curve $C$ is smooth, and that all divisor classes in $\Jac(C) = \textrm{Pic}^{0}(C)$ have a unique representative of the form $P_{1} + \cdots + P_{g} - g \infty$, where each $P_{i}$ is a point on $C$. To introduce coordinates into $\Jac(C)$, Mumford describes how to parametrize unordered $g$-tuples of points on $C_{1}$. Given $P_{i} = (x_{i}, y_{i})$, where $1 \leq i \leq g$ and $P_{i} \neq \iota(P_{j})$ for $i \neq j$, we define two polynomials describing the divisor $P_{1} + \cdots + P_{g}$. The first is defined as 

\[
u(x) = \prod_{i = 1}^{n} (x - x_{i}) = u_{g} x^{g} + u_{g-1} x^{g-1} + \cdots + u_{0} ;
\]
that is, it is the monic polynomial whose roots are the $x$-coordinates of the $P_{i}$'s counted with their multiplicity $\mult(P_{i})$. The second polynomial $v(x) = \sum_{i = 0}^{g-1} v_{i} x^{i}$ is defined to be the unique polynomial of degree $g-1$ which approximates the function $y$ up to order $\mult(P_{i})$ at $P_{i}$; that is, where $\val_{P_{i}}(v - y) = \mult(P_{i})$ for all $1 \leq i \leq g$, and $\val_{P_{i}}$ is the valuation at $P_{i}$. Note that for each $i$ where $y_{i} \neq 0$, the coordinate function $z_{i} = x - x_{i}$ is a uniformizer at $P_{i}$, and re-expressing the polynomial $v$ in terms of $z_{i}$ the condition $\val_{P_{i}}(v - y) = \mult(P_{i})$ amounts to imposing $\mult(P_{i})$ linear relations on the coefficients $v_{i}$. This gives $g$ linear relations total, which may be solved to find the coefficients of $v$. To see the uniqueness claim, observe that if $v_{1}$ and $v_{2}$ are any two such polynomials their difference $v_{1} - v_{2}$ satisfies
\[ \val_{P_{i}}(v_{1} - v_{2}) \geq \min \{ \val_{P_{i}}(v_{1} - y), \val_{P_{i}}(y - v_{2}) \} = \mult(P_{i}) . \]
But then $v_{1} - v_{2}$ is a polynomial of degree at most $g-1$ and has $g$ roots with multiplicity, hence must be zero.

The pairs $(u, v)$ are in one-to-one correspondence with degree $g$ effective divisors on $C_{1}$ not containing any pair of conjugate points: the roots of $u$ give the $x$-coordinates $x_{i}$ of the $g$ points, the value $v(x_{i})$ gives their $y$-coordinates $y_{i}$, and as we have seen the pair $(u, v)$ is uniquely determined. Moreover, we have
\[ \val_{P_{i}}(f - v^2) = \val_{P_{i}}(y^2 - v^2) = \mult(P_{i}) + \val_{P_{i}}(y + v) \geq \mult(P_{i}) , \]
where equality holds provided that $y_{i} \neq 0$ since then $y + v$ is non-vanishing at $P_{i}$. Hence $f - v^2$ is a polynomial in $x$ of degree $2g+1$ which vanishes to order $\mult(P_{i})$ at each point $P_{i}$, and so we have that $u | (f - v^2)$. Writing $w = \sum_{i = 0}^{g+1} w_{i} x^{i}$ for the unique monic degree $g+1$ polynomial which satisfies $f - v^2 = u w$, we get the following relations by examining the $x^{i}$ coefficient:
\begin{align}
\label{jaceqs}
f_{i} - \sum_{j = 0}^{i} v_{j} v_{i-j} = \sum_{j = 0}^{i} u_{j} w_{i-j} \hspace{6em} 0 \leq i \leq 2g+1 .
\end{align}
Here we have adopted a convention which will be in use throughout the paper, which is that polynomials may be regarded as formal power series in which all but finitely many coefficients, all of which have non-negative index, are zero. Thus we have that each of the sets of coefficients $f_{i}, u_{i}, v_{i}$ and $w_{i}$ are defined for all $i \in \mathbb{Z}_{\geq 0}$ (or, when it will be convenient, all $i \in \mathbb{Z}$), and so the equation ($\ref{jaceqs}$) holds for all $i \in \mathbb{Z}_{\geq 0}$, although it is only non-trivial when $0 \leq i \leq 2g+1$. 

% When we wish to emphasize precisely which coefficients in the sequence $\{ A_{i} \}_{i \in \mathbb{Z}}$ are non-zero, we will occasionally write $\supp A \subseteq \cdran{c}{d}$, by which we mean that the non-zero coefficients of $A$ lie in the range $c \leq i \leq d$. 

The polynomials $u$, $v$ and $w$ have $g + g + (g+1) = 3g + 1$ undetermined coefficients among them, and as $i$ ranges from $0$ to $2g$ we obtain $2g+1$ relations from ($\ref{jaceqs}$), where we note that the relation obtained in the case $i = 2g+1$ is redundant. We have the following result from Mumford\cite{lecontheta2}:
\begin{thm}[Mumford]
\label{mumfDenseOpen}
The equations (\ref{jaceqs}) for $0 \leq i \leq 2g$ define a $g$-dimensional affine variety $Z \subset \mathbb{A}_{\mathbbm{k}}^{3g+1}$ whose points are in bijection with divisors of the form 
\[ 
\left\{ D = \sum_{i = 1}^{g} P_{i} : \hspace{0.5em} P_{i} \neq \infty \textrm{ for all } i, \hspace{0.5em} P_{i} \neq \iota(P_{j}) \textrm{ for } i \neq j \right\} .
\]
If $\mathbbm{k} = \mathbb{C}$ then the variety is smooth.
\end{thm}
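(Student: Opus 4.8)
The plan is to eliminate the coordinate $w$, establish the bijection by an explicit inverse construction, and then read off the geometric assertions from a tangent-space computation. First observe that the relations (\ref{jaceqs}) for $0 \le i \le 2g$ are exactly the coefficientwise form of the single polynomial identity $uw = f - v^2$, where $u$ is monic of degree $g$, $\deg v \le g-1$, and $w$ is monic of degree $g+1$ (the $i = 2g+1$ relation being forced by monicity, as noted). Because $u$ is monic, polynomial division writes the quotient and remainder of $f - v^2$ by $u$ with coefficients that are polynomials in the $u_j$ and $v_j$, with no denominators; the identity then holds precisely when $w$ is this quotient and the $g$ coefficients $r_0, \ldots, r_{g-1}$ of the remainder (of degree $< g$) vanish. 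This exhibits an isomorphism $Z \cong Z'$, where $Z' = \{r_0 = \cdots = r_{g-1} = 0\} \subset \mathbb{A}^{2g}$ in the coordinates $(u_0, \ldots, u_{g-1}, v_0, \ldots, v_{g-1})$ and $w$ is recovered polynomially, so it suffices to analyze $Z'$.

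For the bijection, the forward map sends a divisor $D = \sum_i P_i$ in the stated set to the pair $(u, v)$ built in the discussion preceding the theorem, and the proposed inverse sends $(u, v) \in Z'$ to $D = \sum_i (x_i, v(x_i))$, where the $x_i$ are the roots of $u$ with multiplicity. Evaluating $uw = f - v^2$ at $x_i$ gives $v(x_i)^2 = f(x_i)$, so each $(x_i, v(x_i))$ is a finite point of $C$, hence different from $\infty$; and since the points of $D$ over any given $x$-value all carry the same $y$-coordinate, $D$ contains no pair of distinct conjugate points. The subtle point is to exclude a ramification point (where $y = 0$) of multiplicity $\ge 2$, which would produce the forbidden coincidence $P_i = \iota(P_j)$: were $x_0$ such a point, then $v(x_0) = f(x_0) = 0$, so $f$ being squarefree forces $f - v^2$ to vanish to order exactly $1$ at $x_0$, whereas $uw$ vanishes to order $\ge 2$ --- a contradiction. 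Thus $D$ lies in the target set. That the two maps are mutually inverse follows once one checks, using $uw = f - v^2$ and the nonvanishing of $y + v$ at nonramification points (this is where $\textrm{char}\, \mathbbm{k} \ne 2$ is used), that $\val_{P_i}(v - y) \ge \mult(P_i)$; then $v$ is the unique interpolating polynomial of the preceding discussion, and the two constructions match.

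The same local analysis yields the fact that drives the geometry: $w(x_0) \ne 0$ at every root $x_0$ of $u$, so $\gcd(u, w) = 1$ throughout $Z'$. Indeed, at a nonramification root the excerpt's computation gives $\val_P(f - v^2) = \mult(P) = \val_P(u)$, whence $\val_P(w) = 0$, while at a ramification root (necessarily simple, by the above) $f - v^2$ and $u$ have simple zeros at $x_0$ and $v^2$ a double zero, so again $w(x_0) \ne 0$. Using this I compute the Zariski tangent space at $(u, v) \in Z'$: a first-order deformation $(\delta u, \delta v)$ with $\deg \delta u, \deg \delta v \le g-1$ preserves $u \mid (f - v^2)$ exactly when $u \mid (w\,\delta u + 2v\,\delta v)$, a system of $g$ linear conditions on the $2g$-dimensional space of such pairs. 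Since $\gcd(u, w) = 1$, multiplication by $w$ is invertible modulo $u$, so already $\delta u \mapsto w\,\delta u \bmod u$ surjects onto the $g$-dimensional space of residues; hence the tangent space has dimension exactly $g$ at every point. As $Z'$ is cut out by $g$ equations in $\mathbb{A}^{2g}$, each component has dimension $\ge g$, and comparison with the tangent-space bound forces $Z'$ to be smooth of pure dimension $g$. (The argument uses only $\textrm{char}\, \mathbbm{k} \ne 2$, so it covers $\mathbbm{k} = \mathbb{C}$ in particular.)

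For irreducibility, let $V \subset (C \setminus \{\infty\})^g$ be the open set of ordered tuples with no conjugate pair; it is irreducible, hence connected. One checks that the Mumford data $(u, v)$ depends regularly on the tuple --- the $u_j$ are symmetric functions of the $x_i$, and $v$ solves an interpolation system whose determinant is invertible on $V$ --- so this defines a morphism $V \to Z'$, which is surjective by the bijection. The continuous image of a connected space is connected, so $Z'$ is connected; being also smooth, it is irreducible of dimension $g$. Transporting back along $Z \cong Z'$ proves the theorem. I expect the genuine obstacle to be the coprimality statement $\gcd(u, w) = 1$ together with the multiplicity control at ramification points: this one local computation is simultaneously what makes the inverse construction land in the correct set of divisors and what makes the tangent space the right size, and it is the only place where squarefreeness of $f$ and $\textrm{char}\, \mathbbm{k} \ne 2$ are genuinely needed.
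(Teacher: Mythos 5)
The paper offers no proof of Theorem \ref{mumfDenseOpen} at all: it is quoted from Mumford's Lectures on Theta with a citation, and the text moves straight on to the covering-by-translates discussion. So there is no argument of the paper's to compare yours against line by line; your proposal has to stand on its own, and most of it does. The elimination of $w$ (quotient-plus-vanishing-remainder, giving $Z \cong Z' \subset \mathbb{A}^{2g}$), the two-way bijection with the divisor set, the control of ramification points via squarefreeness of $f$ (a repeated root of $u$ at a ramification point would force $f - v^2$ to vanish to order exactly $1$ while $uw$ vanishes to order $\geq 2$), the resulting coprimality $\gcd(u, w) = 1$, and the tangent-space computation $(\delta u, \delta v) \mapsto (w\,\delta u + 2v\,\delta v) \bmod u$ with surjectivity from invertibility of $w$ modulo $u$ are all correct, and together with Krull's height theorem they give smoothness of pure dimension $g$ over any algebraically closed field of characteristic $\neq 2$ --- which is stronger than the stated claim (smoothness only asserted for $\mathbbm{k} = \mathbb{C}$, which is also where Mumford's own transcendental argument lives).

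The genuine gap is in the irreducibility step, specifically the claim that the assignment $(P_1, \ldots, P_g) \mapsto (u, v)$ is a morphism on all of $V$ because ``$v$ solves an interpolation system whose determinant is invertible on $V$.'' That justification is false as stated: on the loci of $V$ where points collide, the Lagrange/Vandermonde system degenerates (its determinant is essentially $\prod_{i<j}(x_i - x_j)$, which vanishes exactly there), and the correct interpolation conditions become Hermite conditions whose very form --- which derivatives of $y$ are matched, at which points --- jumps from stratum to stratum. There is no single linear system with regular entries whose invertibility you can invoke, and the regularity of $v$ across collision strata (via divided differences of the local branches of $y$) is precisely the nontrivial content of the statement that Mumford coordinates define a morphism from an open subset of $\textrm{Sym}^g C$; it cannot be waved through. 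A concrete repair within your framework: restrict to the open set $V_0 \subset V$ of pairwise-distinct tuples, where Lagrange interpolation genuinely is a morphism onto the open subset $Z' \cap \{\textrm{disc}(u) \neq 0\}$, which is therefore irreducible; then show this open set is dense in $Z'$, i.e.\ rule out a component of $Z'$ lying entirely inside the discriminant locus. The latter needs an argument --- for instance a formal-arc/Hensel construction splitting a repeated root of $u$ (every repeated root lies over a nonramification point by your own multiplicity control, so $f$ admits a formal square root $\phi$ there with $\phi + v$ a unit, and one can deform the root into distinct roots while adjusting $v$ by CRT over $\mathbbm{k}[[t]]$). Without something of this kind, irreducibility is unproved; and it is not a claim you can simply drop, since the paper itself invokes irreducibility of $Z \times Z$ in the proof of Lemma \ref{pqlem}.
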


\vspace{0.5em}

\noindent The equations $(\ref{jaceqs})$ therefore parametrize the points of $\Jac(C) \setminus \Theta$, where 
\[ \Theta := \left \{ [D] \in \Jac(C) : D \sim \sum_{i = 1}^{g-1} P_{i} - (g-1) \infty, \hspace{1em} P_{i} \in C(\mathbbm{k}) \textrm{ for } 1 \leq i \leq g-1  \right \} . \]

\noindent Mumford then shows that one can cover $\Jac(C)$ by an atlas of charts isomorphic to $Z$. He does this by studying sets of the form $[e_{T}] + (\Jac(C) \setminus \Theta)$, where $e_{T}$ is a $2$-torsion divisor associated to a certain subset $T$ of the branch points of $C$ (those points $P$ satisfying $\iota (P) = P$), and showing that they cover $\Jac(C)$. He then shows that the translation map $[D] \mapsto [e_{T}] + [D]$ is algebraic, and that gluing a translate of $Z$ for each set $[e_{T}] + (\Jac(C) \setminus \Theta)$ gives an atlas of charts for $\Jac(C)$. 

To describe an explicit group law on $\Jac(C)$, therefore, it suffices to describe it on $Z$. This is first and foremost because $Z$ defines a dense open set of $\Jac(C)$, and so knowing the group law on $Z$ allows one to compute it for almost all points of $\Jac(C)$ (i.e., apart from on a set of measure zero when $\mathbbm{k} = \mathbb{C}$), and secondly because if one wants to add points belonging to $\Theta$, one can pre- and post-compose with algebraic translations by $[e_{T}]$ and $-[e_{T}]$ to bring both summands into a chart isomorphic to $Z$. In principle, one has to deal with numerous edge cases corresponding to the various situations in which the translation and group-law maps may not be defined, which can occur for instance when a group addition or translation has its result in a different chart. The number and complexity of such edge cases appears to grow with $g$, and the author is unaware of an easy way to resolve them in general. For this reason, we will restrict our attention to describing the group law for divisor classes belonging to a dense open subset of $Z$, and leave a discussion of these special cases to future work.

\section{Special Classes of Polynomials}

The derivation of the group law equations will involve two operations of interest: reduction of one polynomial by another polynomial, and equating coefficients of various polynomial expressions. The process of solving equations arising from these operations has a few general features, which we develop here for use in the next section. In this section we work mainly with formal power series for simplicity, although we emphasize that in the applications that follow we will deal exclusively with polynomials. If $L$ is a Laurent series, then $[L]_{i}$ denotes its $i$'th coefficient.

For each $n \geq 1$, denote by
\[
S_{n} := \left \{ \sigma = (\sigma_{1}, \hdots, \sigma_{k}) \in \mathbb{Z}_{\geq 1}^{k} : \hspace{1em} \sum_{i = 1}^{k} \sigma_{i} = n , \hspace{0.75em} k \in \mathbb{Z}_{\geq 0} \right \}
\]
the set of compositions of the integer $n$. When $n = 0$ we adopt the usual convention that $S_{0}$ contains a single empty composition. If $\sigma \in S_{n}$ we denote by $|\sigma|$ the length of $\sigma$, which is the number of elements in the corresponding sum, or zero if $n = 0$. We have the following Lemma:

\begin{lem}
\label{reduclemma}
\emph{Suppose $\alpha = \sum_{i} \alpha_{i} x^{i}$ and $\beta = \sum_{i} \beta_{i} x^{i}$ are Laurent series over $\mathbbm{k}$. Define the \emph{$n$th iterate of the $d$th order reduction of $\alpha$ by $\beta$ at index $k$} to be the Laurent series $A_{n}$ defined inductively as follows:}
\begin{align*}
A_{-1} &= \alpha \\
A_{n} &= A_{n-1} - x^{d-n} \coeff{A_{n-1}}{k-n} \beta
\end{align*}
Then
\[ [A_{n}]_{i} = \alpha_{i} - \sum_{0 \leq \ell \leq m \leq n} \alpha_{k - \ell} \beta_{i - d + m} \left( \sum_{\sigma \in S_{m-\ell}} (-1)^{|\sigma|} \prod_{r = 1}^{|\sigma|} \beta_{k-d-\sigma_{r}} \right) .
\]
\end{lem}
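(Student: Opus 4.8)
The plan is to argue by induction on $n$. The natural base case is $n = -1$: there the summation set $\{(\ell, m) : 0 \le \ell \le m \le -1\}$ is empty, so the claimed identity reduces to $[A_{-1}]_i = \alpha_i$, which is exactly the definition $A_{-1} = \alpha$. (One can equally start from $n = 0$, where only the term $\ell = m = 0$ survives and, since $S_0$ contributes the empty product $1$, gives $[A_0]_i = \alpha_i - \alpha_k \beta_{i-d}$, matching $A_0 = \alpha - x^d \alpha_k \beta$.) For the inductive step I would unwind the recursion a single time: applying $[\,\cdot\,]_i$ to $A_n = A_{n-1} - x^{d-n}\coeff{A_{n-1}}{k-n}\beta$ and noting that multiplication by $x^{d-n}$ shifts indices by $d-n$ yields
\[
[A_n]_i = [A_{n-1}]_i - \coeff{A_{n-1}}{k-n}\,\beta_{i-d+n}.
\]

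To manage the bookkeeping I would abbreviate the inner composition sum as
\[
c_m := \sum_{\sigma \in S_m} (-1)^{|\sigma|} \prod_{r=1}^{|\sigma|} \beta_{k-d-\sigma_r},
\]
so the target formula reads $[A_n]_i = \alpha_i - \sum_{0 \le \ell \le m \le n} \alpha_{k-\ell}\,\beta_{i-d+m}\,c_{m-\ell}$, and likewise for $[A_{n-1}]_i$ and the scalar $\coeff{A_{n-1}}{k-n}$ with $n$ replaced by $n-1$. Splitting the target sum into the strata $m \le n-1$ and $m = n$, the first stratum recombines with the leading $\alpha_i$ to give exactly $[A_{n-1}]_i$ by the inductive hypothesis; comparing with the unwound expression above, the whole identity collapses to the single scalar claim
\[
\coeff{A_{n-1}}{k-n} = \sum_{\ell = 0}^{n} \alpha_{k-\ell}\, c_{n-\ell}.
\]
Substituting the inductive hypothesis into the left side, peeling off the $\ell = n$ term on the right using $c_0 = 1$, and comparing the coefficient of each $\alpha_{k-\ell}$ (for $0 \le \ell \le n-1$) then reduces everything, after setting $p = n - \ell$ and $j = m - \ell$, to the recursion
\[
c_p = -\sum_{j=0}^{p-1} \beta_{k-d-(p-j)}\, c_j \qquad (p \ge 1).
\]

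The conceptual heart of the proof is this last recursion for the composition sums $c_p$, and I would establish it by the standard decomposition of a composition by its final part: every $\sigma \in S_p$ with $p \ge 1$ arises uniquely by appending a last part $q \in \{1, \dots, p\}$ to a composition $\tau \in S_{p-q}$, which multiplies the product by $\beta_{k-d-q}$ and flips the sign since $|\sigma| = |\tau| + 1$; summing over $q$ and reindexing by $j = p - q$ gives precisely the recursion. I expect the main obstacle to be clerical rather than conceptual, namely keeping the three interacting index shifts aligned — the $d - n$ coming from the monomial $x^{d-n}$, the $k - n$ from the coefficient being extracted, and the internal shifts buried in $c_{m-\ell}$ — so that the $m = n$ stratum of the target matches the recursion for $c_p$ exactly. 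Once the abbreviation $c_m$ is fixed and the target sum is stratified by $m$, the remaining algebra is routine and the argument closes.
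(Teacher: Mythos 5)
Your proof is correct and takes essentially the same route as the paper's: induction on $n$, unwinding the recursion once, substituting the inductive hypothesis for both $[A_{n-1}]_i$ and $\coeff{A_{n-1}}{k-n}$, and closing the argument with the bijection that builds a composition of $n-\ell$ by appending a final part to a shorter composition. The only difference is organizational --- your abbreviation $c_m$ and the isolated scalar recursion $c_p = -\sum_{j=0}^{p-1}\beta_{k-d-(p-j)}\,c_j$ cleanly package the identity that the paper verifies inline within its long chain of equalities.
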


\begin{rem}
The special case of Lemma \ref{reduclemma} which will be of interest is when $d \geq 0$, $\alpha$ is a polynomial of degree $k$, and $\beta$ is a monic polynomial of degree $k-d$, in which case $A_{d}$ will be the polynomial obtained by reducing $\alpha$ modulo $\beta$.
\end{rem}

\begin{proof}
For the case $n = 0$, we have 
\[
[A_{0}]_{i} = [\alpha - x^{d} \alpha_{k} \beta]_{i} = \alpha_{i} - \sum_{0 \leq \ell \leq m \leq 0} \alpha_{k - \ell} \beta_{i - d + m} \cdot 1,
\]
where the factor of $1$ can be viewed as coming from the empty product $\prod_{r = 1}^{|\sigma|} \beta_{k - d - \sigma_{r}}$ where $\sigma$ is the unique element of $S_{0}$. For the inductive case, we first observe that when $\ell \leq m \leq n-1$, the elements of $S_{n - \ell}$ are in bijection with the elements of $\bigcup_{m = \ell}^{n-1} S_{m-\ell}$, where the bijection is obtained in the natural way by adding in the last summand of $(n-m)$. We thus compute that
\begin{align*}
[A_{n}]_{i} &= [A_{n-1}]_{i} - [x^{d-n} [A_{n-1}]_{k-n} \beta]_{i} \\
&= \left[ \alpha_{i} - \sum_{0 \leq \ell \leq m \leq n-1} \alpha_{k - \ell} \beta_{i - d + m} \left( \sum_{\sigma \in S_{m - \ell}} (-1)^{|\sigma|} \prod_{r = 1}^{|\sigma|} \beta_{k - d - \sigma_{r}} \right) \right] \\
& \hspace{2em} - \left[ \alpha_{k-n} - \sum_{0 \leq \ell \leq m \leq n-1} \alpha_{k - \ell} \beta_{k - d - (n - m)} \left( \sum_{\sigma \in S_{m - \ell}} (-1)^{|\sigma|} \prod_{r = 1}^{|\sigma|} \beta_{k - d - \sigma_{r}} \right) \right] \beta_{i-d+n} \\
&= \left[ \alpha_{i} - \sum_{0 \leq \ell \leq m \leq n-1} \alpha_{k - \ell} \beta_{i - d + m} \left( \sum_{\sigma \in S_{m - \ell}} (-1)^{|\sigma|} \prod_{r = 1}^{|\sigma|} \beta_{k - d - \sigma_{r}} \right) \right] \\
& \hspace{2em} - \left[ \alpha_{k-n} \beta_{i-d+n} + \sum_{0 \leq \ell \leq n-1} \alpha_{k - \ell} \beta_{i-d+n} \left( \sum_{\ell \leq m \leq n-1} \sum_{\sigma \in S_{m - \ell}} (-1)^{|\sigma|+1} \beta_{k - d - (n - m)} \prod_{r = 1}^{|\sigma|} \beta_{k - d - \sigma_{r}} \right) \right] \\
&= \left[ \alpha_{i} - \sum_{0 \leq \ell \leq m \leq n-1} \alpha_{k - \ell} \beta_{i - d + m} \left( \sum_{\sigma \in S_{m - \ell}} (-1)^{|\sigma|} \prod_{r = 1}^{|\sigma|} \beta_{k - d - \sigma_{r}} \right) \right] \\
& \hspace{2em} - \left[ \alpha_{k-n} \beta_{i-d+n} + \sum_{0 \leq \ell \leq n-1} \alpha_{k - \ell} \beta_{i-d+n} \left( \sum_{\sigma \in S_{n - \ell}} (-1)^{|\sigma|} \prod_{r = 1}^{|\sigma|} \beta_{k - d - \sigma_{r}} \right) \right] \\
&= \alpha_{i} - \sum_{0 \leq \ell \leq m \leq n} \alpha_{k - \ell} \beta_{i - d + m} \left( \sum_{\sigma \in S_{m-\ell}} (-1)^{|\sigma|} \prod_{r = 1}^{|\sigma|} \beta_{k-d-\sigma_{r}} \right) .
\end{align*}
\end{proof}

The next special situation of interest arises when equating coefficients of two polynomials, one of which arises from a product. We again work in the language of formal power series for convenience.

\begin{lem}
\label{usolverlemma}
Suppose that $\alpha = \sum_{i \geq 0} \alpha_{i} x^{i}$, $\beta = \sum_{i \geq 0} \beta_{i} x^{i}$ and $\gamma = \sum_{i \geq 0} \gamma_{i} x^{i}$ are formal power series over $\mathbbm{k}$, and that $\alpha = \beta \gamma$. Suppose also that $\gamma_{0} \neq 0$. Then we are in the situation that $\alpha_{k} = \sum_{j = 0}^{k} \beta_{j} \gamma_{k-j}$, and so 
\begin{align*}
\beta_{k} = \sum_{j = 0}^{k} \frac{\alpha_{j}}{\gamma_{0}} \sum_{\sigma \in S_{k-j}} \frac{(-1)^{|\sigma|}}{\gamma_{0}^{|\sigma|}} \prod_{r = 1}^{|\sigma|} \gamma_{\sigma_{r}} .
\end{align*}
\end{lem}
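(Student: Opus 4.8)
The plan is to recognize the right-hand side as the coefficient formula for the product $\beta = \alpha \gamma^{-1}$, where $\gamma^{-1}$ denotes the multiplicative inverse of $\gamma$ in the ring of formal power series. Since $\gamma_{0} \neq 0$ this inverse exists, and the entire content of the lemma is an explicit formula for its coefficients followed by an expansion of the Cauchy product $\alpha \gamma^{-1}$.

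First I would factor out the constant term by writing $\gamma = \gamma_{0}(1 + \delta)$, where $\delta = \sum_{i \geq 1} (\gamma_{i}/\gamma_{0}) x^{i}$ has vanishing constant coefficient. Because $\delta$ has no constant term, the geometric series $(1+\delta)^{-1} = \sum_{m \geq 0} (-1)^{m} \delta^{m}$ converges in the $x$-adic topology on formal power series and furnishes a genuine inverse, so that $\gamma^{-1} = \gamma_{0}^{-1} \sum_{m \geq 0} (-1)^{m} \delta^{m}$.

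The key combinatorial step is to extract the coefficient of $x^{n}$ from $\delta^{m}$. Expanding the $m$-fold product, a monomial $x^{n}$ arises by selecting from each of the $m$ factors a term $x^{\sigma_{r}}$ with $\sigma_{r} \geq 1$ subject to $\sum_{r=1}^{m} \sigma_{r} = n$; such choices correspond exactly to the compositions $\sigma \in S_{n}$ of length $|\sigma| = m$. Hence $[\delta^{m}]_{n} = \sum_{\sigma \in S_{n},\, |\sigma| = m} \prod_{r=1}^{m} (\gamma_{\sigma_{r}}/\gamma_{0})$, and summing over $m$ (each $S_{n}$ being finite, so the total sum is finite) gives
\[
[\gamma^{-1}]_{n} = \frac{1}{\gamma_{0}} \sum_{\sigma \in S_{n}} \frac{(-1)^{|\sigma|}}{\gamma_{0}^{|\sigma|}} \prod_{r=1}^{|\sigma|} \gamma_{\sigma_{r}},
\]
where the $m = 0$ term is the empty composition of $0$, contributing $1$ in accordance with the convention on $S_{0}$.

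Finally, from $\alpha = \beta\gamma$ and the invertibility of $\gamma$ we obtain $\beta = \alpha\gamma^{-1}$, whence $\beta_{k} = \sum_{j=0}^{k} \alpha_{j} [\gamma^{-1}]_{k-j}$; substituting the formula above for $[\gamma^{-1}]_{k-j}$ yields exactly the claimed expression. The only delicacy is bookkeeping — checking that the composition indexing matches the coefficient extraction and that every sum involved is finite — rather than any genuine obstacle. As an alternative more in the spirit of Lemma \ref{reduclemma}, one could instead argue by induction on $k$: solving $\alpha_{k} = \sum_{j=0}^{k} \beta_{j} \gamma_{k-j}$ for $\beta_{k} = \gamma_{0}^{-1}\big(\alpha_{k} - \sum_{j=0}^{k-1}\beta_{j} \gamma_{k-j}\big)$ and substituting the inductive hypothesis, the resulting double sum reorganizes via the bijection that appends a final part to each composition, exactly as in the previous proof.
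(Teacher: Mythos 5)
Your proposal is correct, but your primary argument takes a genuinely different route from the paper. The paper proves the formula by direct induction on $k$: it solves $\alpha_{k} = \sum_{j=0}^{k-1}\beta_{j}\gamma_{k-j} + \beta_{k}\gamma_{0}$ for $\beta_{k}$, substitutes the inductive hypothesis for each $\beta_{i}$ with $i < k$, and reorganizes the resulting double sum via the bijection between $S_{k-j}$ and $\bigcup_{i=j}^{k-1} S_{i-j}$ given by appending the final part $(k-i)$ — precisely the argument you sketch only as an afterthought in your last sentence, and the same bijection trick used in the proof of Lemma \ref{reduclemma}. Your main argument instead inverts $\gamma$ explicitly: writing $\gamma = \gamma_{0}(1+\delta)$ with $\delta$ of positive valuation, expanding $(1+\delta)^{-1}$ as a geometric series (which converges $x$-adically since $\delta^{m}$ has valuation at least $m$), and identifying $[\delta^{m}]_{n}$ with the sum over compositions of $n$ of length exactly $m$. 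This is a sound and complete argument; every step checks out, including the finiteness bookkeeping and the handling of the empty composition. What your route buys is conceptual clarity: it explains \emph{why} compositions appear (they index terms of the geometric series for the inverse), and it isolates a reusable closed formula for $[\gamma^{-1}]_{n}$. What the paper's route buys is uniformity: its induction is structurally identical to that of Lemma \ref{reduclemma}, so the two lemmas are proved by one technique, and it avoids invoking any topology on the power series ring, however mild.
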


\begin{proof}
For $k = 0$ we have $\alpha_{0} = \beta_{0} \gamma_{0}$, and so we may invert $\gamma_{0}$ to get the desired equation. Considering the inductive case, we have that $\alpha_{k} = \sum_{i = 0}^{k-1} \beta_{i} \gamma_{k-i} + \beta_{k} \gamma_{0}$, and so
\begin{align*}
\beta_{k} &= \frac{\alpha_{k}}{\gamma_{0}} + \sum_{i = 0}^{k-1} \frac{(-1)}{\gamma_{0}} \beta_{i} \gamma_{k-i} \\
&= \frac{\alpha_{k}}{\gamma_{0}} + \sum_{i = 0}^{k-1} \frac{(-1)}{\gamma_{0}} \left( \sum_{j = 0}^{i} \frac{\alpha_{j}}{\gamma_{0}} \sum_{\sigma \in S_{i-j}} \frac{(-1)^{|\sigma|}}{\gamma_{0}^{|\sigma|}} \prod_{r = 1}^{|\sigma|} \gamma_{\sigma_{r}} \right) \gamma_{k-i} \\
&= \frac{\alpha_{k}}{\gamma_{0}} + \sum_{j = 0}^{k-1} \frac{\alpha_{j}}{\gamma_{0}} \sum_{j \leq i \leq k-1} \sum_{\sigma \in S_{i-j}} \frac{(-1)^{|\sigma| + 1}}{\gamma_{0}^{|\sigma| + 1}} \gamma_{k-i} \prod_{r = 1}^{|\sigma|} \gamma_{\sigma_{r}} \\
&= \sum_{j = 0}^{k} \frac{\alpha_{j}}{\gamma_{0}} \sum_{\sigma \in S_{k-j}} \frac{(-1)^{|\sigma|}}{\gamma_{0}^{|\sigma|}} \prod_{r = 1}^{|\sigma|} \gamma_{\sigma_{r}} ,
\end{align*}
where we have used the natural bijection between $S_{k-j}$ and $\bigcup_{i = j}^{k-1} S_{i-j}$ obtained by adding $(k-i)$.
\end{proof}

\section{The Group Law}

To compute the sum of two distinct points $P$ and $Q$ (representing the divisor classes $[P - \infty]$ and $[Q - \infty]$) on an elliptic curve, one intersects the curve $C$ with a line $\ell$ through $P$ and $Q$ which intersects the curve $C$ at a third point $R$. The sum $[P - \infty] + [Q - \infty]$ is then the divisor class $[\iota(R) - \infty]$, and equations for the group law may be computed by explicitly solving the curve equation for the coordinates of the point $\iota(R)$.

To generalize this strategy to a hyperelliptic curve of genus $g$, it is natural to try adding $[D_{1}] = [P_{1} + \cdots + P_{g} - g \infty]$ to $[D_{2}] = [Q_{1} + \cdots + Q_{g} - g \infty]$ by constructing an interpolating function $\ell(x)$ through the points $P_{1}, \hdots, P_{g}, Q_{1}, \hdots, Q_{g}$ which intersects the curve at $g$ other points $R_{1}, \hdots, R_{g}$. The sum $[D_{1}] + [D_{2}]$ is then the divisor class $[\iota(R_{1}) + \cdots + \iota(R_{g}) - g \infty]$. If one then attempts to solve for the coordinates of the points $\iota(R_{1}), \hdots, \iota(R_{g})$, however, this seems to require extracting roots, and so this strategy does not produce rational formulas for the group law. 

An alternative strategy, employed in the work of Costello and Lauter\cite{costlaut}, is to instead represent the divisors $D_{1}$ and $D_{2}$ using two pairs $(u_{1}, v_{1})$ and $(u_{2}, v_{2})$ as in Section \ref{jacSection}. If one does this, then the condition that the interpolation function $\ell$ intersect the curve $C$ with appropriate multiplicity at the various points $P_{i}$ and $Q_{i}$ for $1 \leq i \leq g$ becomes equivalent to the two modular conditions $v_{1} \equiv \ell \pmod{u_{1}}$ and $v_{2} \equiv \ell \pmod{u_{2}}$. Performing a modular reduction, one gets a linear system of equations for the coefficients of $\ell$, and solves them to find the interpolation function $\ell$ in terms of the coefficients of $u_{1}, v_{1}, u_{2}$ and $v_{2}$. Noting that the function $f - \ell^2$ vanishes on all the points $P_{i}, Q_{i}$ and any additional intersections $R_{j}$, one can then derive linear relations for the coefficients of a polynomial $u_{3}$ whose roots give the $x$-coordinates of the points $R_{j}$ by noting that $u_{3} | (f - \ell^2)$; it is then a simple matter to find an appropriate $v_{3}$ to describe the sum.

Costello and Lauter carry out this strategy explicitly for $g = 2$, and sketch how it might work in general, but their approach has an important drawback. Namely, the interpolation functions they use are simply polynomials in $x$, and for $g > 2$ they do not give $g$ additional intersections $R_{1}, \hdots, R_{g}$ but instead some number of intersections strictly between $g$ and $2g$. Therefore, their strategy requires carrying out multiple stages of calculations, the number of which depends on $g$, and appropriate formulas must be derived for each choice of $g$ independently. Ideally, it would be possible to carry out a similar strategy with an interpolation function for which exactly $g$ additional intersections $R_{1}, \hdots, R_{g}$ are guaranteed in the general case, and so do the computation ``all at once''.

To achieve such an interpolation of the points $P_{1}, \hdots, P_{g}, Q_{1}, \hdots, Q_{g}$, we use rational functions of the form 
\begin{equation}
\label{interp}
\frac{p(x)}{q(x)} = \frac{p_{a} x^{a} + \cdots + p_{1} x + p_{0}}{q_{b} x^{b} + \cdots + q_{1} x + q_{0}} ,
\end{equation}
where $a = (3g - \ep)/2$, $b = (g-2+\ep)/2$, and $\ep$ is the parity of $g$. Since we have $\deg p + \deg q + 2 = 2g + 1$ coefficients and only $2g$ points to interpolate, we have one additional degree of freedom. The interpolation function is a polynomial of degree $1$ (respectively $3$) for the cases $g = 1$  (respectively $g = 2$). Such interpolation functions are considered by Leitenberger in his paper\cite{lieten}, and were first considered by Jacobi\cite{jacobi} in connection with Abel's Theorem. Leitenberger uses these interpolation functions to derive equations for the group law in the $g = 2$ case, but his methods do not appear to generalize. Our derivation, which will be more in line with the polynomial division techniques used in the paper \cite{costlaut} of Costello and Lauter, will achieve explicit formulas for all positive integers $g$.

\subsection{Group Law on a Dense Open Set}

Recall that, by the discussion in Section \ref{jacSection}, we are working to describe the group law on the open dense set $Z$ described in Theorem \ref{mumfDenseOpen}. The points of $Z$ are in bijection with unordered tuples of $g$ points on $C_{1}$, none of which are conjugates of each other. The variety $Z$ is described by $2g+1$ equations in the coefficients of three polynomials $u, v$ and $w$, however the coefficients of $w$ are entirely determined by those of $u$ and $v$ so we may ignore $w$ and simply use the polynomials $u$ and $v$. 

The derivation takes the form of a series of three lemmas. The first of these, Lemma \ref{pqlem}, derives equations for the interpolation function $p/q$ in terms of the coefficients of two pairs $(u, v)$ and $(u', v')$ representing divisors $D$ and $D'$. The second lemma, Lemma \ref{udplemma}, uses the relationship between $p/q$ and $f$ to find formulas for the coefficients of a degree $g$ monic polynomial $u''$ representing the $x$-coordinates of a divisor $D''$ which corresponds to the sum $[D] + [D']$. The third and final lemma solves for the coefficients of the polynomial $v''$ in terms of the coefficients of $u''$ and $p/q$.

\newpage

\begin{lem}
\label{pqlem}
Suppose that $(u, v)$ and $(u', v')$ describe divisors $D = \sum_{i = 1}^{g} P_{i}$ and $D' = \sum_{i = 1}^{g} P_{i}'$ respectively, such that the summands in $D$ have $x$-coordinates which are distinct from the $x$-coordinates of the summands in $D'$. Let $a = (3g - \ep)/2$ and $b = (g-2+\ep)/2$ as before, and define $d = (g - \ep)/2$. Define the quantities: 
\begin{align*}
\kappa_{i, \ell} &= \sum_{\ell \leq m \leq d} u_{i-d+m} \left( \sum_{\sigma \in S_{m - \ell}} (-1)^{|\sigma|} \prod_{r = 1}^{|\sigma|} u_{g-\sigma_{r}} \right) \\
\kappa'_{i, \ell} &= \sum_{\ell \leq m \leq d} u'_{i-d+m} \left( \sum_{\sigma \in S_{m - \ell}} (-1)^{|\sigma|} \prod_{r = 1}^{|\sigma|} u'_{g-\sigma_{r}} \right) \\
\lambda_{i, j} &= - v_{i-j} + \sum_{0 \leq \ell \leq d} v_{(a-j)-\ell} \kappa_{i, \ell} \\
\lambda_{i, j}' &= - v_{i-j}' + \sum_{0 \leq \ell \leq d} v_{(a-j)-\ell}' \kappa_{i, \ell}'
\end{align*}
Then the requirement that a rational function of the form in (\ref{interp}) interpolates the divisors $D$ and $D'$ induces the following system of linear relations on the coefficients of $p/q$:

\begin{centerlongequation}
\left( \begin{array}{ccc|ccc}
\kappa_{0,d} - \kappa'_{0,d} & \cdots & \kappa_{0,0} - \kappa'_{0,0} & \lambda'_{0,1} - \lambda_{0,1} & \cdots & \lambda'_{0,b} - \lambda_{0,b} \\
\kappa_{1,d} - \kappa'_{1,d} & \cdots & \kappa_{1,0} - \kappa'_{1,0} & \lambda'_{1,1} - \lambda_{1,1} & \cdots & \lambda'_{1,b} - \lambda_{1,b} \\
\kappa_{2,d} - \kappa'_{2,d} & \cdots & \kappa_{2,0} - \kappa'_{2,0} & \lambda'_{2,1} - \lambda_{2,1} & \cdots & \lambda'_{2,b} - \lambda_{2,b} \\
\vdots & \ddots & \vdots & \vdots & \ddots & \vdots \\
\kappa_{g-2,d} - \kappa'_{g-2,d} & \cdots & \kappa_{g-2,0} - \kappa'_{g-2,0} & \lambda'_{g-2,1} - \lambda_{g-2,1} & \cdots & \lambda'_{g-2,b} - \lambda_{g-2,b} \\
\kappa_{g-1,d} - \kappa'_{g-1,d} & \cdots & \kappa_{g-1,0} - \kappa'_{g-1,0} & \lambda'_{g-1,1} - \lambda_{g-1,1} & \cdots & \lambda'_{g-1,b} - \lambda_{g-1,b} \\
\end{array} \right)
\begin{pmatrix} p_{g}/q_{0} \\ \vdots \\ p_{a}/q_{0} \\ \midrule q_{1}/q_{0} \\ \vdots \\ q_{b}/q_{0} \end{pmatrix}
=
\begin{pmatrix} \lambda_{0,0}-\lambda'_{0, 0} \\ \lambda_{1,0}-\lambda'_{1, 0} \\ \lambda_{2,0}-\lambda'_{2, 0} \\ \vdots \\ \lambda_{g-2,0}-\lambda'_{g-2, 0} \\ \lambda_{g-1,0}-\lambda'_{g-1, 0} \end{pmatrix}
\end{centerlongequation}
\[
p_{i} = \sum_{\ell = 0}^{d} p_{a - \ell} \kappa_{i, \ell} - \sum_{j = 1}^{b} q_{j} \lambda_{i, j} - q_{0} \lambda_{i, 0} \hspace{7em} 0 \leq i \leq g-1
\]
Label the $g \times g$ matrix $M$, and let $M_{j}$ denote the matrix obtained from $M$ by replacing the $j$th column with the solution vector on the right. Then on a dense open set of $Z \times Z$ these relations determine an interpolation function $p/q$ with the desired properties via the equations
\begin{align*}
p_{g+i} &= \det(M_{1+i})		& 0 \leq i \leq d+1 \\
q_{0} &= \det(M)				& \\
q_{i} &= \det(M_{1+d+i})		& 1 \leq i \leq b \\
p_{i} &= \sum_{\ell = 0}^{d} \det(M_{1+(d-\ell)}) \kappa_{i, \ell} - \sum_{j = 1}^{b} \det(M_{1+d+j}) \lambda_{i, j} - \det(M) \lambda_{i, 0} & 0 \leq i \leq g-1 
\end{align*}

\end{lem}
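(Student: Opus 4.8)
The plan is to read the phrase ``interpolates the divisors $D$ and $D'$'' as a pair of modular congruences, convert these into linear relations on the coefficients of $p$ and $q$ using the reduction formula of Lemma \ref{reduclemma}, eliminate the low-order coefficients of $p$ by subtracting the two congruences against one another, and finally solve the resulting $g\times g$ system by Cramer's rule. Throughout I work only with the pairs $(u,v)$ and $(u',v')$, since the polynomials $w$, $w'$ play no role here.

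First I would make precise the interpolation condition and show it is equivalent to $p \equiv vq \pmod{u}$ together with $p \equiv v'q \pmod{u'}$. Requiring $\val_{P_i}(p/q - y) \ge \mult(P_i)$ at each point $P_i$ of $D$ is the same as requiring $\val_{P_i}(p/q - v) \ge \mult(P_i)$, because $v$ already approximates $y$ to order $\mult(P_i)$ at $P_i$ (Section \ref{jacSection}); writing $p/q - v = (p-vq)/q$ and noting that on the locus $\gcd(q,u)=1$ the factor $q$ is a unit at each $P_i$, this says exactly that $u \mid (p - vq)$, and symmetrically $u' \mid (p - v'q)$. The degree choices $\deg p = a$, $\deg q = b$ give $a+b+2 = 2g+1$ unknown coefficients constrained by the $2g$ linear conditions coming from the two congruences, leaving a single projective degree of freedom that will later be absorbed into the normalization $q_0 = \det(M)$.

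Next I would make the two congruences explicit. Applying Lemma \ref{reduclemma} with $\alpha = p$, the monic $\beta = u$ of degree $g$, reduction order $d$ and index $k=a$ gives $[p \bmod u]_i = p_i - \sum_{\ell=0}^{d} p_{a-\ell}\kappa_{i,\ell}$ for $0 \le i \le g-1$, which is precisely the stated $\kappa_{i,\ell}$ after collecting the inner $m$-sum. Writing $vq = \sum_{j=0}^{b} q_j\, x^j v$ and reducing each summand the same way (harmlessly padding $vq$ up to formal degree $a$, since its true degree lies below $a$) gives $[vq \bmod u]_i = -\sum_{j=0}^{b} q_j\lambda_{i,j}$, where the identity $\sum_{j} q_j v_{a-j-\ell} = [vq]_{a-\ell}$ is what repackages the reduction into the $\lambda_{i,j}$. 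Equating residues coefficientwise yields the system $p_i - \sum_{\ell} p_{a-\ell}\kappa_{i,\ell} = -\sum_j q_j\lambda_{i,j}$, and the primed data give the analogous system. Subtracting the two cancels each unknown $p_i$ with $0 \le i \le g-1$, leaving $\sum_{\ell=0}^{d} p_{a-\ell}(\kappa_{i,\ell}-\kappa'_{i,\ell}) = \sum_{j=0}^{b} q_j(\lambda_{i,j}-\lambda'_{i,j})$; moving the $j=0$ term to the right and dividing by $q_0$ reproduces exactly the displayed matrix equation, whose $g$ unknowns are $p_g/q_0, \dots, p_a/q_0$ and $q_1/q_0, \dots, q_b/q_0$ (a count of $(d+1)+b = g$). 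Cramer's rule then expresses each unknown as $\det(M_c)/\det(M)$, and choosing the scale $q_0 = \det(M)$ clears every denominator and produces the closed forms for the $p_{g+i}$ and the $q_i$; substituting these into the unsubtracted first system recovers the remaining coefficients $p_i$, $0 \le i \le g-1$, in the stated determinantal form.

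The main obstacle is not this computation but the clause that these relations determine an interpolating $p/q$ \emph{with the desired properties on a dense open set}. Cramer's rule formally returns a solution whenever $\det(M) \ne 0$, but I must still check that the solution is geometrically meaningful: that $q \ne 0$ as a polynomial, that $\deg p = a$ and $\deg q = b$ hold exactly, and that $\gcd(q,u) = \gcd(q,u') = 1$, so that the equivalence between the congruences and genuine interpolation established in the first step actually applies. Each of these is an open condition, so the heart of the matter is to show their common locus is nonempty, equivalently that $\det(M) \not\equiv 0$ on $Z\times Z$. Since $Z$ is an open subset of the irreducible variety $\Jac(C)$, the product $Z\times Z$ is irreducible and $\det(M)$ is a regular function on it, so it suffices to exhibit one pair $(D,D')$ where the Jacobi--Leitenberger interpolation problem has the expected one-dimensional solution space; at such a configuration of $2g$ distinct, pairwise non-conjugate points in general position the $2g$ linear conditions are independent, which forces rank $g$ after the subtraction and hence $\det(M)\ne 0$. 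The intersection of the open conditions above is then dense open, and is the set on which the construction is valid.
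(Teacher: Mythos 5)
Your computational derivation is correct and follows the same route as the paper: you read interpolation as the pair of congruences $p \equiv vq \pmod{u}$, $p \equiv v'q \pmod{u'}$, apply Lemma \ref{reduclemma} with $k = a$, $n = d$ to turn each congruence into $g$ linear relations (your repackaging of the reduction of $vq$ into the $\lambda_{i,j}$, using linearity of the reduction, is a valid rearrangement of the paper's single reduction of $p - qv$), subtract to eliminate $p_0, \dots, p_{g-1}$, and finish with Cramer's rule and the normalization $q_0 = \det(M)$. The genuine gap is in the non-degeneracy claim, which is the real content of the final assertion. You correctly reduce, via irreducibility of $Z \times Z$ and openness of $\det(M) \neq 0$, to exhibiting a single admissible pair $(D, D')$ at which the system is non-degenerate, but you then dispose of this by asserting that ``at such a configuration of $2g$ distinct, pairwise non-conjugate points in general position the $2g$ linear conditions are independent.'' This is circular: precisely because $\det(M) \neq 0$ is an open condition on an irreducible variety, independence ``in general position'' is \emph{equivalent} to independence at some point, which is exactly what remains to be proved; no reason is given why any configuration whatsoever makes the conditions independent. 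The paper supplies the missing idea: if $p/q$ and $\widetilde{p}/\widetilde{q}$ both satisfy the two congruences, then $p\widetilde{q} \equiv \widetilde{p}q \pmod{uu'}$, and since $\deg(p\widetilde{q} - \widetilde{p}q) \leq a + b = 2g - 1 < 2g = \deg(uu')$ (here the hypothesis that $D$ and $D'$ have disjoint $x$-coordinates makes $uu'$ the relevant modulus), in fact $p\widetilde{q} = \widetilde{p}q$. This degree-bound uniqueness-up-to-rescaling statement holds for \emph{every} admissible pair, and it, together with the trivial existence of a nonzero solution to $2g$ homogeneous conditions in $2g+1$ unknowns, is what pins down the rank. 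Without it your proposal has no proof of the key claim.

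A secondary weakness in your formulation: even granting that the $2g$ conditions are independent, your inference ``rank $g$ after the subtraction and hence $\det(M) \neq 0$'' does not follow. The reduced system is $g$ equations in the $g+1$ homogeneous unknowns $p_g, \dots, p_a, q_0, \dots, q_b$, and rank $g$ only guarantees that \emph{some} $g \times g$ minor is non-vanishing, whereas $\det(M)$ is the specific minor omitting the $q_0$-column; one further needs that the one-dimensional solution line is not contained in the hyperplane $q_0 = 0$, i.e.\ that the essentially unique interpolating function has non-vanishing constant coefficient in $q$. The paper's phrasing avoids this trap by working with the dehomogenized square system (whose solvability is by construction equivalent to the existence of an interpolant of the form (\ref{interp}) with $q_0 \neq 0$), so that uniqueness up to rescaling immediately forces $\det(M) \neq 0$ wherever such an interpolant exists. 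A repaired version of your argument needs both the degree-bound uniqueness statement and this $q_0 \neq 0$ normalization point made explicit.
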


\begin{proof}
Label the points $P_{i} = (x_{i}, y_{i})$ and $P_{i}' = (x_{i}', y_{i}')$. The requirement that $p/q$ interpolates the points of $D$ is equivalent to the condition that $p/q \equiv v \pmod{u}$, and since we require that $q$ does not vanish at any $x_{i}$, to the condition that $p - qv \equiv 0 \pmod{u}$. By expanding this relation, we see that this condition is equivalent to
\begin{align*}
\sum_{i \geq 0} \alpha_{i} x^{i} := \sum_{i \geq 0} \left( p_{i} - \sum_{j = 0}^{b} q_{j} v_{i-j} \right) x^{i} \equiv 0 \pmod{u} .
\end{align*}
To find appropriate linear relations for the coefficients of $p/q$, we apply Lemma \ref{reduclemma} with $\beta_{i} = u_{i}$, $n = d = a-g = (g - \ep)/2$ and $k = a$. We therefore get for $0 \leq i \leq g-1$ the relations
\begin{align*}
0 &= \alpha_{i} - \sum_{\ell = 0}^{d} \alpha_{a - \ell} \kappa_{i, \ell} \\
&= \left( p_{i} - \sum_{j = 0}^{b} q_{j} v_{i-j} \right) - \sum_{\ell = 0}^{d} \left( p_{a - \ell} - \sum_{j = 0}^{b} q_{j} v_{a - \ell - j} \right) \kappa_{i, \ell} \\
&= p_{i} - \sum_{\ell = 0}^{d} p_{a - \ell} \kappa_{i, \ell} +  \sum_{j = 1}^{b} q_{j} \left( - v_{i-j} + \sum_{\ell = 0}^{d} v_{(a-j) - \ell} \kappa_{i, \ell} \right) + q_{0} \left( - v_{i} + \sum_{\ell = 0}^{d} v_{a - \ell} \kappa_{i, \ell} \right)
\end{align*}
Using the notation defined in the statement of the Lemma, this reads
\begin{align}
\label{pqlemfstrel}
0 &= p_{i} - \sum_{\ell = 0}^{d} p_{a - \ell} \kappa_{i, \ell} +  \sum_{j = 1}^{b} q_{j} \lambda_{i, j} + q_{0} \lambda_{i, 0}   & 0 \leq i \leq g-1.
\end{align}
The analogous process for the primed variables gives us the same equations with $\kappa'_{i,\ell}$ replacing $\kappa_{i,\ell}$ and $\lambda'_{i,j}$ replacing $\lambda_{i,j}$. Therefore, taking differences we see that in order for $p/q$ to have the desired form, we must have 
\begin{align}
\label{pqlemsndrel}
0 &= \sum_{\ell = 0}^{d} p_{a - \ell} (\kappa_{i, \ell} - \kappa'_{i, \ell}) +  \sum_{j = 1}^{b} q_{j} (\lambda'_{i,j} - \lambda_{i, j}) + q_{0} (\lambda'_{i, 0} - \lambda_{i, 0})  & 0 \leq i \leq g-1.
\end{align}
Equation (\ref{pqlemsndrel}) gives the matrix equation after dividing through by $q_{0}$, and equation (\ref{pqlemfstrel}) gives the desired relation for $p_{i}$ for $0 \leq i \leq g-1$. The formulas for the coefficients of $p/q$ then follow by Cramer's rule, assuming that the linear system is non-degenerate.

We now show that the matrix $M$ is non-degenerate on a dense open set of $Z \times Z$. Note that because $Z \times Z$ is irreducible and $\det(M) \neq 0$ is an open condition, it suffices to show that the set of points for which $M$ is non-degenerate is non-empty. Note that the conditions $p/q \equiv v \pmod{u}$ and $p/q \equiv v' \pmod{u'}$ uniquely determine $p/q$ up to a projective rescaling, since if $\widetilde{p}/\widetilde{q}$ is another interpolation function satisfying the same conditions we have $p \widetilde{q} \equiv \widetilde{p} q \pmod{u u'}$ and hence $p \widetilde{q} = \widetilde{p} q$ since $\deg (p \widetilde{q} - \widetilde{p} q) = 2g-1 < 2g = \deg(u u')$. Since the derived linear system is equivalent to the condition that $p/q$ is an interpolation function of the desired form, the statement that the system is solvable on an open dense set of $Z \times Z$ amounts to the statement that at least one such interpolation function exists, which is clearly true.

%Observe that the rows of the matrix $M$ can be viewed as having been obtained via row reduction from the system defined by equation (\ref{pqlemfstrel}) and its primed counterpart, and so the rows of $M$ are independent provided that the equations defined by (\ref{pqlemfstrel}) and its primed counterpart are independent. Those equations, in turn, are equivalent to the two conditions that $p/q \equiv v \pmod{u}$ and $p/q \equiv v' \pmod{u'}$. If $\widetilde{p}/\widetilde{q}$ is another interpolation function of the same form satisfying the same conditions, then since the roots of $u$ and $u'$ are distinct, $p/q \equiv \widetilde{p}/\widetilde{q} \pmod{u u'}$. Then multiplying through by $q$ and $\widetilde{q}$ lets us conclude that $p \widetilde{q} \equiv \widetilde{p} q \pmod{u u'}$, which is simply the statement that $p \widetilde{q} = \widetilde{p} q$ since the degree of the polynomial $p \widetilde{q} - \widetilde{p} q$ is $2g-1$. Thus we see that the imposed congruences uniquely determine the interpolation function (up to a projective rescaling of the coefficients), and so since the imposed congruences are equivalent to the linear system described by (\ref{pqlemfstrel}) and (\ref{pqlemsndrel}), this linear system is solvable, and hence the matrix $M$ invertible, whenever an interpolation function of the desired form exists with $q_{0} \neq 0$. Since at least one such interpolation function exists for some choice of $D$ and $D'$, the subset of points in $Z \times Z$ for which $\det(M) \neq 0$ is non-empty, which is what we wanted to show.
\end{proof}

\begin{lem}
\label{udplemma}
Continue with the notation and assumptions of Lemma \ref{pqlem}. Define the quantities: 
\begin{align*}
\rho &= p_{a}^2 ( 1 - \ep) - f_{2g+1} q_{b}^2 \ep \\
\omega_{j} &= \sum_{i = 0}^{j} u_{i} u'_{j-i} \\
\eta_{k} &= \sum_{j = 0}^{k} \left( p_{j} p_{k-j} - f_{k-j} \sum_{i = 0}^{j} q_{i} q_{j-i} \right)
\end{align*}
Suppose the sum $[D - g \infty] + [D' - g \infty]$ is represented by a divisor $D'' - g \infty$ with $D'' = \sum_{i = 1}^{g} P_{i}''$ and $P_{i}''$ a point on $C_{1}$ for $1 \leq i \leq g$. Then if $(u'', v'')$ is the pair of polynomials representing $D''$, the coordinates of $u''$ are given by:
\[
u''_{j} = \sum_{i = 0}^{j} \frac{\eta_{i}}{\rho \omega_{0}} \sum_{\sigma \in S_{j-i}} \frac{(-1)^{|\sigma|}}{\omega_{0}^{|\sigma|}} \prod_{r = 1}^{|\sigma|} \omega_{\sigma_{r}} .
\]
\end{lem}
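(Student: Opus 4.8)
The plan is to reduce the statement to the single polynomial identity
\[ p^2 - f q^2 = \rho \cdot (u u') \cdot u'' \]
and then extract the coefficients of $u''$ by a deconvolution using Lemma \ref{usolverlemma}. The point is that the quantities in the statement are precisely the coefficients of the polynomials appearing here: one checks directly that $\eta_{k} = \coeff{p^2 - f q^2}{k}$ and $\omega_{j} = \coeff{u u'}{j}$, while $\rho$ will turn out to be the leading coefficient of $p^2 - f q^2$. Granting the identity, I would apply Lemma \ref{usolverlemma} with $\alpha = p^2 - f q^2$, $\gamma = \rho \cdot u u'$ and $\beta = u''$, so that $\gamma_{0} = \rho \omega_{0}$ and $\gamma_{\sigma_{r}} = \rho \omega_{\sigma_{r}}$. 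Since $\prod_{r} \gamma_{\sigma_{r}}$ contributes $|\sigma|$ factors of $\rho$ which cancel against $\gamma_{0}^{|\sigma|}$, only a single overall $1/\rho$ survives from $1/\gamma_{0}$, leaving exactly the powers of $\omega_{0}$ and $\omega_{\sigma_{r}}$ displayed in the statement; the formula for $u''_{j}$ then follows at once.

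It remains to prove the identity, which splits into a divisibility step and an identification step. For divisibility, the interpolation conditions established in Lemma \ref{pqlem} amount to $p \equiv q v \pmod{u}$ and $p \equiv q v' \pmod{u'}$, while the Jacobian relations (\ref{jaceqs}), being equivalent to $u \mid (f - v^2)$ and $u' \mid (f - (v')^2)$, give $v^2 \equiv f \pmod{u}$ and $(v')^2 \equiv f \pmod{u'}$. Hence $p^2 \equiv q^2 v^2 \equiv f q^2 \pmod{u}$ and similarly modulo $u'$, so both $u$ and $u'$ divide $p^2 - f q^2$; as $D$ and $D'$ have disjoint $x$-coordinates the polynomials $u$ and $u'$ are coprime, whence $u u' \mid (p^2 - f q^2)$. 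A degree count shows $p^2 - f q^2$ has degree $3g$, with leading coefficient $p_{a}^2$ when $g$ is even and $-f_{2g+1} q_{b}^2$ when $g$ is odd (there is no cancellation, since the two candidate top terms have different pole orders at $\infty$), i.e.\ exactly $\rho$ in both parities; since $u u'$ is monic of degree $2g$, the quotient is $\rho$ times a monic polynomial $\widetilde{u}''$ of degree $g$.

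The main work, and the step I expect to be the chief obstacle, is identifying $\widetilde{u}''$ with $u''$. I would argue through divisors on $C$. Because $p, q$ are polynomials in $x$ and $y$ has its only pole at $\infty$, the function $q y - p$ is regular away from $\infty$, where it has a pole of order $3g$, so its zero divisor has degree $3g$; the interpolation conditions force $D$ and $D'$ into this zero divisor, giving $\operatorname{div}(q y - p) = D + D' + E - 3g \infty$ for some effective $E$ of degree $g$. Principality yields $[D - g\infty] + [D' - g\infty] + [E - g\infty] = 0$, so $[D - g\infty] + [D' - g\infty] = [\iota E - g\infty]$ using $P + \iota P \sim 2\infty$; by hypothesis this class equals $[D'' - g\infty]$, forcing $D'' = \iota E$, so that $D''$ and $E$ have the same $x$-coordinates with the same multiplicities. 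Applying $\iota$ gives $\operatorname{div}(q y + p) = \iota D + \iota D' + \iota E - 3g\infty$, and multiplying the two divisors yields $\operatorname{div}(p^2 - f q^2) = (D + \iota D) + (D' + \iota D') + (E + \iota E) - 6g\infty$; reading off $x$-coordinates identifies the monic factor $\widetilde{u}''$ cut out by $E$ with the monic polynomial $u''$ cut out by $D''$. The delicate part here is the multiplicity bookkeeping together with the genericity hypotheses implicit in Lemma \ref{pqlem} under which these computations are valid, namely that $q$ does not vanish at the interpolation points, that the two points above each root of $\widetilde{u}''$ are distinct, and that $\rho \neq 0$ and $\omega_{0} \neq 0$; all of these hold on the dense open set where the construction is defined. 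With the identity in hand, the stated formula for $u''_{j}$ is exactly the output of Lemma \ref{usolverlemma}.
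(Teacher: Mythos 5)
Your proposal is correct and follows essentially the same route as the paper: establish $uu' \mid (p^2 - fq^2)$ from the congruences $p \equiv qv \pmod{u}$ and $v^2 \equiv f \pmod{u}$, identify the degree-$3g$ leading coefficient with $\rho$, identify the remaining monic factor with $u''$ via a divisor-class computation on $C$ using $P + \iota P \sim 2\infty$, and finish by Lemma \ref{usolverlemma}. Your only departures are cosmetic (working with $qy - p$ instead of $p - yq$, folding $\rho$ into $\gamma$ when invoking Lemma \ref{usolverlemma}, and spelling out the uniqueness of reduced representatives that the paper uses implicitly).
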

\begin{proof}
The polynomials $p$ and $q$ in Lemma \ref{pqlem} were computed to satisfy $p - qv \equiv 0 \pmod{u}$. Furthermore, the pair $(u, v)$ satisfies $f - v^2 \equiv 0 \pmod{u}$. Together these two facts imply that
\[ p^2 - f q^2 \equiv p^2 - v^2 q^2 \equiv (p - q v)^2 + 2qv(p - qv) \equiv 0 +  0 \equiv 0 \pmod{u} . \]
The analogous fact is true for $u'$. Since $u$ and $u'$ do not share roots, we see that $uu'|(p^2 - fq^2)$. The polynomial $p^2 - f q^2$ has degree $\max \{ 2 a, 2(b+g)+1 \} = 3g$ with leading coefficient $\rho$, and so we may write $p^2 - fq^2 = \rho u u' u''$ where $u''$ is monic of degree $g$ and the roots $x''_{i}$ of $u''$ are such that there exists $Q_{i} = (x''_{i}, y''_{i})$ on $C_{1}$ satisfying $p(x''_{i}) - y''_{i} q(x''_{i}) = 0$. 

Viewing $p - y q$ as a function on $C$, it has zeros precisely at the roots of the polynomial $p^2 - f q^2$, and so has $3g$ of them (with multiplicity) corresponding to the roots of the polynomials $u, u'$ and $u''$. As the number of zeros on $C$ must equal the number of poles, the function $p - y q$ must then have a pole of order $3g$ at $\infty$, and so we find that 
\[ (D - g\infty) + (D' - g \infty) \sim - \sum_{i = 1}^{g} Q_{i} + g \infty . \]
The relations $Q_{i} + \iota(Q_{i}) \sim 2 \infty $ then give us that 
\[ (D - g \infty) + (D' - g \infty) \sim \sum_{i = 1}^{g} \iota(Q_{i}) - g \infty . \]
So we see that if we take $P_{i}'' = (x''_{i}, - y''_{i})$, then $u''$ satisfies the hypotheses of the theorem.

To solve for the coefficients $u''_{j}$, we expand the relation $ p^2 - f q^2 = \rho u u' u''$ and equate coefficients. This gives us:
\[ \sum_{j = 0}^{k} \left( p_{j} p_{k-j} -  f_{k-j} \sum_{i = 0}^{j} q_{i} q_{j-i} \right) = \rho \sum_{j = 0}^{k} u''_{j} \left( \sum_{i = 0}^{k-j} u_{i} u'_{(k-j)-i} \right) , \]
or simply $\eta_{k}/\rho = \sum_{j = 0}^{k} u''_{j} \omega_{k-j}$. Applying Lemma \ref{usolverlemma} gives the result.
\end{proof}

\begin{rem}
The formulas in Lemma \ref{udplemma} are defined provided that $\omega_{0} \neq 0$ and $\rho \neq 0$. The first condition reduces to the statement that $x_{i} \neq 0$ and $x'_{i} \neq 0$ for all $1 \leq i \leq g$, and the second says that either $p_{a} \neq 0$ or $q_{b} \neq 0$ depending on the parity of $g$. This latter case again reduces to the non-vanishing of a certain matrix determinant as defined in Lemma \ref{pqlem}, which again defines a dense open subset of $Z \times Z$ for similar reasons as before. 
\end{rem}

\begin{lem}
\label{vdplemma}
Continue with the notation and assumptions in Lemmas \ref{pqlem} and \ref{udplemma}. Define the quantities:
\begin{align*}
\kappa''_{i, \ell} &= \sum_{\ell \leq m \leq d} u''_{i-d+m} \left( \sum_{\sigma \in S_{m - \ell}} (-1)^{|\sigma|} \prod_{r = 1}^{|\sigma|} u''_{g-\sigma_{r}} \right) \\
\tau_{i, s} &= - \sum_{m = g+1-\ep}^{d+s} q_{a-m} \kappa''_{i, m-s} \\
\mu_{i} &= -p_{i} + \sum_{0 \leq \ell \leq d} p_{a-\ell} \kappa''_{i, \ell}
\end{align*}
Then we have

\begin{centerlongequation}
\left[\begin{pmatrix}
q_{0} & 0 & \cdots & 0 & 0 & \cdots & 0 \\
q_{1} & q_{0} & \cdots & 0 & 0 & \cdots & 0 \\
\vdots & \vdots & \ddots & \vdots & \vdots & \ddots & \vdots \\
q_{b} & q_{b-1} & \cdots & q_{0} & 0 & \cdots & 0 \\ 
0 & q_{b} & \cdots & q_{1} & q_{0}  & \cdots  & 0 \\
\vdots & \vdots & \ddots & \ddots  & \ddots  & \ddots  & \vdots \\
0 & 0 & \cdots & q_{b} & q_{b-1} & \cdots & q_{0} \\
\end{pmatrix} 
+
\begin{pmatrix}
0 & \cdots & 0 & \tau_{0, d+1} & \cdots & \tau_{0, g-1} \\
0 & \cdots & 0 & \tau_{1, d+1} & \cdots & \tau_{1, g-1} \\
0 & \cdots & 0 & \tau_{2, d+1} & \cdots & \tau_{2, g-1} \\
\vdots & \vdots & \vdots & \vdots & \vdots & \vdots \\
\vdots & \vdots & \vdots & \vdots & \vdots & \vdots \\
0 & \cdots & 0 & \tau_{g-2, d+1} & \cdots & \tau_{g-2, g-1} \\
0 & \cdots & 0 & \tau_{g-1, d+1} & \cdots & \tau_{g-1, g-1} \\
\end{pmatrix}\right]
\begin{pmatrix}
v''_{0} \\ v''_{1} \\ v''_{2} \\ \vdots \\ \vdots \\ v''_{g-2} \\ v''_{g-1}
\end{pmatrix}
=
\begin{pmatrix}
\mu_{0} \\ \mu_{1} \\ \mu_{2} \\ \vdots \\ \vdots \\ \mu_{g-2} \\ \mu_{g-1}
\end{pmatrix}
\end{centerlongequation}
\end{lem}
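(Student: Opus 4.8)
The plan is to reduce the determination of $v''$ to a single polynomial congruence, reduce that congruence modulo $u''$ using Lemma \ref{reduclemma}, and then read off the two matrices by collecting coefficients.

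First I would pin down the defining property of $v''$. Since $(u'', v'')$ represents $D'' = \sum_{i} P_{i}''$ with $P_{i}'' = (x_{i}'', -y_{i}'')$, the degree $\leq g-1$ polynomial $v''$ must satisfy $v''(x_{i}'') = -y_{i}''$ at each root $x_{i}''$ of $u''$. But by the construction in Lemma \ref{udplemma} the points $Q_{i} = (x_{i}'', y_{i}'')$ lie on $p - yq = 0$, so $y_{i}'' = p(x_{i}'')/q(x_{i}'')$ and hence $q(x_{i}'')v''(x_{i}'') + p(x_{i}'') = 0$ at every root. On the dense open set where $u''$ has distinct roots not meeting the zeros of $q$ this yields the congruence $qv'' + p \equiv 0 \pmod{u''}$ (for roots of higher multiplicity the same valuation argument as in Section \ref{jacSection} applies). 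Setting $\gamma = qv'' + p = \sum_{i} \gamma_{i} x^{i}$, so that $\gamma_{i} = p_{i} + \sum_{j=0}^{b} q_{j} v''_{i-j}$, and noting $\deg(qv'') = b+g-1 < a$ so that $\deg \gamma \leq a$, the task is to express ``$\gamma \equiv 0 \pmod{u''}$'' as a linear system in the $v''_{t}$.

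Next I would apply Lemma \ref{reduclemma} to reduce $\gamma$ modulo the monic polynomial $u''$, taking $\alpha = \gamma$, $\beta = u''$, $k = a$, and order $d = a - g = (g-\ep)/2$. Since $A_{d} = \gamma \bmod u''$ must vanish, collapsing the inner $m$-sum into $\kappa''_{i,\ell}$ gives, for $0 \leq i \leq g-1$,
\[ 0 = [A_{d}]_{i} = \gamma_{i} - \sum_{\ell = 0}^{d} \gamma_{a-\ell}\, \kappa''_{i,\ell} . \]
Substituting $\gamma_{i} = p_{i} + \sum_{j=0}^{b} q_{j} v''_{i-j}$ and, for the high-index coefficients, $\gamma_{a-\ell} = p_{a-\ell} + \sum_{j} q_{j} v''_{a-\ell-j}$, and then separating the purely $p$-dependent terms, reproduces the right-hand side $\mu_{i} = -p_{i} + \sum_{\ell=0}^{d} p_{a-\ell} \kappa''_{i,\ell}$, while the term $\sum_{j=0}^{b} q_{j} v''_{i-j}$ contributes the lower-triangular banded (Toeplitz) matrix with entry $q_{i-t}$ in position $(i,t)$, i.e.\ the first matrix.

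The main obstacle is the bookkeeping for the correction term $-\sum_{\ell=0}^{d} \kappa''_{i,\ell} \sum_{j} q_{j} v''_{a-\ell-j}$. Collecting the coefficient of $v''_{t}$ by setting $j = a-\ell-t$ gives $-\sum_{\ell} \kappa''_{i,\ell}\, q_{a-\ell-t}$, and the constraint $0 \leq a-\ell-t \leq b$, combined with the identity $a-b = g+1-\ep$, restricts $\ell$ to the range $g+1-\ep-t \leq \ell \leq d$. The substitution $m = \ell + t$ then shows this coefficient vanishes exactly when $t \leq d$ (the range is empty) and equals $\tau_{i,t} = -\sum_{m = g+1-\ep}^{d+t} q_{a-m} \kappa''_{i,m-t}$ when $t \geq d+1$, which places the $\tau$ entries precisely in columns $d+1, \ldots, g-1$ as in the second matrix. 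I expect the only delicate point to be tracking these index ranges carefully, since an off-by-one would misplace the boundary column $d+1$; everything else is a routine rearrangement matching the stated system.
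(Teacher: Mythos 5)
Your proposal is correct and follows essentially the same route as the paper: both start from the congruence $p + qv'' \equiv 0 \pmod{u''}$, reduce via Lemma \ref{reduclemma} with $\beta = u''$, $k = a$, $n = d$ to get $0 = \gamma_i - \sum_{\ell=0}^{d}\gamma_{a-\ell}\kappa''_{i,\ell}$, and perform the same change of indices (your $t, m = \ell + t$ is the paper's $s, m = s + \ell$) to locate the $\tau_{i,s}$ entries in columns $d+1,\dots,g-1$. Your added justification of the congruence from the geometric description of $P_i'' = (x_i'', -y_i'')$, and the explicit check that the column range is empty for $t \leq d$, are slightly more detailed than the paper's one-line assertions but amount to the same argument.
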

\noindent and so
\[ v''_{i} = \frac{\det (Q + T)_{1+i}}{\det (Q+T)}, \]
where $Q + T$ is the sum of the two matrices between the square brackets, with $Q$ denoting the first matrix and $T$ the second, and $(Q+T)_{j}$ is the matrix obtained by replacing the $j$th column of $Q+T$ with the solution vector on the right.

\begin{proof}
As with the pairs $(u, v)$ and $(u', v')$ we have a relation $p + qv'' \equiv 0 \pmod{u''}$, this time with a sign change to account for the sign of the $y$-coordinate in the points $P''_{i}$. Proceeding as in Lemma \ref{pqlem}, we have the equations
\begin{align*}
0 &= p_{i} - \sum_{\ell = 0}^{d} p_{a - \ell} \kappa''_{i, \ell} +  \sum_{j = 0}^{b} q_{j} \left(v''_{i-j} - \sum_{\ell = 0}^{d} v''_{(a-j) - \ell} \kappa''_{i, \ell} \right) \\
&= - \mu_{i} + \left(\sum_{j = 0}^{g-1} v''_{j} q_{i-j}\right) - \left( \sum_{j = 0}^{b} \sum_{\ell = 0}^{d} v''_{(a-j)-\ell} q_{j}  \kappa''_{i, \ell} \right)
\end{align*}
To extract the coefficient of $v''_{(a-j)-\ell}$ in the second summation on the last line, we use the change of indices $s = (a-j)-\ell$ and $m = a-j = s+\ell$. As $s$ is the index of $v''$ we have the bound $s \leq g-1$, and from the equality $s = (a-j)-\ell$ we get $s \geq a-b-d = d+1$. Then for fixed $s$, we have $m \leq s + d$ and $m \geq a - b = g+1-\ep$. This gives us the equality
\[ \sum_{j = 0}^{g-1} v''_{j} q_{i-j} + \sum_{s = d+1}^{g-1} v''_{s} \left( - \sum_{m = g+1-\ep}^{s+d} q_{a-m}  \kappa''_{i, m-s} \right) = \mu_{i} , \]
from which the matrix equation follows. The formula for $v''_{i}$ then follows from Cramer's rule.
\end{proof}

\begin{rem}
To understand when the above formulas successfully determine $v''$ (in particular, when $\det(Q + T)$ does not vanish), note that if the roots of $u''$ are distinct and do not coincide with the roots of $q$, then the relationship $p + q v'' \equiv 0 \pmod{u''}$ determines the value of the degree $g-1$ polynomial $v''$ at $g$ distinct points, which suffices to determine it. These conditions on $u''$ and $q$ may be expressed by asserting the non-vanishing of certain discriminant and resultant polynomials, so we once again see that the desired relations hold on some dense open set of $Z \times Z$.
\end{rem}

\begin{thm}
There exist explicit polynomial and rational functions describing the group law on an open dense set of $\Jac(C)$.
\end{thm}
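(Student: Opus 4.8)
The plan is to assemble the three preceding lemmas into a single rational description of the addition map. Recall that $Z \cong \Jac(C) \setminus \Theta$ is a dense open subset of $\Jac(C)$, and that the coordinates of a point of $Z$ are the coefficients of the pair $(u,v)$. Since $\Jac(C)$ is an abelian variety it is irreducible, hence so are $Z$ and $Z \times Z$; consequently any non-empty open subset of $Z \times Z$ is automatically dense, and a finite intersection of non-empty open subsets is again non-empty (and hence dense) open. I would invoke this principle repeatedly.

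First I would restrict attention to $Z \times Z$, which is dense open in $\Jac(C) \times \Jac(C)$, and locate the largest locus on which the formulas make sense. The addition map is surjective, and for any fixed class translation by it is an automorphism of $\Jac(C)$ carrying the dense open $Z$ onto a dense open set that necessarily meets $Z$; hence the locus $U_{0} \subseteq Z \times Z$ on which the sum $[D - g\infty] + [D' - g\infty]$ again lies in $Z$—i.e. is represented by a pair $(u'', v'')$ of the form required by Lemma \ref{udplemma}—is a non-empty open set. This is exactly the standing hypothesis under which that lemma applies.

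Next I would intersect $U_{0}$ with the non-degeneracy loci supplied by the remarks following the three lemmas: the locus where $\det(M) \neq 0$ from Lemma \ref{pqlem}, the locus where $\omega_{0} \neq 0$ and $\rho \neq 0$ from Lemma \ref{udplemma}, and the locus where $\det(Q + T) \neq 0$ together with the discriminant and resultant conditions on $u''$ and $q$ from Lemma \ref{vdplemma}. Each was shown to be a non-empty open subset of $Z \times Z$, so by irreducibility their common intersection $U$ is a non-empty, hence dense, open subset. On $U$ the three lemmas apply in sequence: Lemma \ref{pqlem} expresses the coefficients of $p$ and $q$ as determinants, and therefore as rational functions, in the input coordinates $(u, v, u', v')$; Lemma \ref{udplemma} then expresses the $u''_{j}$ as rational functions of $p$, $q$ and $f$; and Lemma \ref{vdplemma} expresses the $v''_{i}$ as rational functions of $u''$, $p$ and $q$. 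Composing these rational maps—legitimate precisely because $U$ lies in the domain of definition of each successive stage—yields explicit rational functions carrying $(u, v, u', v')$ to $(u'', v'')$, which is the group law.

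The only genuinely non-formal point, and the one I expect to be the main (though modest) obstacle, is the verification that $U$ is non-empty, equivalently that all the non-degeneracy conditions can hold simultaneously together with the requirement that the sum avoid $\Theta$. This is handled entirely by the irreducibility of $Z \times Z$: since each constituent open set is individually non-empty—as recorded in the three remarks and in the translation argument showing $U_{0} \neq \emptyset$—their finite intersection cannot be empty. Finally, because $U$ is dense open in $Z \times Z$, which is in turn dense open in $\Jac(C) \times \Jac(C)$, the composed rational formulas describe the group law on a dense open subset of $\Jac(C)$, which completes the proof.
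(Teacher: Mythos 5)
Your proposal is correct and is essentially the paper's own proof: the paper disposes of the theorem in one line as ``merely a summary of Lemmas \ref{pqlem}, \ref{udplemma} and \ref{vdplemma} and their associated remarks,'' and your argument simply spells out that summary---intersecting the finitely many non-empty open non-degeneracy loci, using irreducibility of $Z \times Z$ to keep the intersection non-empty and dense, and composing the three rational maps. The extra care you take (openness and non-emptiness of the locus where the sum lands back in $Z$, via translation) is a worthwhile detail the paper leaves implicit, but it is not a different approach.
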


\begin{proof}This is merely a summary of Lemmas \ref{pqlem}, \ref{udplemma} and \ref{vdplemma} and their associated remarks.
\end{proof}

\section{Conclusion}

The formulas we have described have some drawbacks compared to the usual methods for computing the group law. For one, the use of inversions and the requirements on both $Z$ and the divisors represented by $(u, v)$ and $(u', v')$ limit the scope of the formulas somewhat, and one might suspect that handling the various edge cases would make them difficult to use. In fact, this is generally not so serious, since most applications of abelian variety arithmetic in cryptography or computer science require the use of finite fields of exponentially large prime characteristic $r$, and if one heuristically models each inequality defining the validity of the group law as holding with probability $(r-1)/r$, then one concludes that encountering most such edge cases is exponentially unlikely in practice.

Another objection is that the formulas do not extend to the important case of doubling. This is already the case when $g=1$, which as shown in Appendix \ref{ellapp} is really just the case of the usual elliptic curve group law, where the chord-based addition formula only holds when adding two distinct points and one must instead use a tangent line in the degenerate case. A similar phenomenon holds here, in that when doubling points the relations in Lemma \ref{pqlem} are always dependent, and one must use additional relations which enforce a higher-order agreement between the interpolation function $p/q$ and the function $y$ on $C$ to determine $p/q$. This is done for the case $g = 2$ in the work of Costello and Lauter, but we do not pursue it here as the approach grows considerably in complexity with $g$. However we may simply observe that one can circumvent this issue entirely by simply computing a scaling of the form $2 [D]$ as a sum of the form $(([D] + [E]) + [D]) - [E]$, where $[E]$ is an appropriate ``dummy'' divisor class chosen at random.

Another objection is that the formulas use expressions that grow quickly in complexity, requiring sums over compositions and matrix determinants, and so are unlikely to be competitive with reduction-based approaches for large $g$. While this is certain to be true asymptotically, the $g = 1$ and $g = 2$ cases (that of the elliptic curve group law and the work of Costello and Lauter respectively) are quite efficient, and a heavily unoptimized implementation by the author\cite{jaccode} was able to use the formulas for Jacobian arithmetic up to $g = 8$ without much difficulty. We note that the general expressions that appear in Lemmas \ref{pqlem} and \ref{vdplemma} obscure the fact that many of the terms that appear in these expressions are often zero (either due to an abundance of zeros in the coefficients of $f$ or because the indices fall out of range), and so in practice the complexity may be overstated. The case where $g=3$ in particular may benefit from some hand-optimization.

We also wish to emphasize the inherent value in explicit constructions. The usual approach to constructing the Jacobian of a curve as an abelian variety uses the language of schemes and representable functors, which is convenient for many theoretical purposes, but carries with it associated baggage that can make it difficult to apply. For this reason, the use of higher-dimensional abelian varieties in cryptography and computer science can often be traced back to either the hyperelliptic Jacobian construction appearing in Mumford's Lectures on Theta, or the work of Flynn, even though it is unlikely those authors had any particular computational application in mind. These constructions are messy, but they can be made practical, whereas the author is unaware of any computational applications of the usual scheme-theoretic approach.

\section{Acknowledgements}

The author thanks Matt Satriano and Jerry Wang for helpful comments on a draft of this manuscript.

\bibliography{arxiv_aug_2018}{}
\bibliographystyle{plain}

\appendix 

\section{The Elliptic Curve Case}
\label{ellapp}

As an illustrative example, we demonstrate that the above derivation gives the usual group law in the case $g = 1$. The equation defining $C_{1}$ is
\[ y^2 = f(x) = x^{3} + f_{2} x^2 + f_{1} x + f_{0} . \]
A pair representing the divisor $D = P$ looks like $(u, v) = (x + u_{0}, v_{0})$. The open set $Z$ is then described by the equations (\ref{jaceqs}), which are
\begin{align*}
f_{0} - v_{0}^2 &= u_{0}w_{0} \\
f_{1} &= u_{0} w_{1} + w_{0} \\
f_{2} &= u_{0} + w_{1} .
\end{align*}
Using the second equation we may eliminate $w_{0}$, and using the third equation we may further eliminate $w_{1}$, resulting in a curve defined by
\begin{align*}
f_{0} - v_{0}^2 &= u_{0}(f_{1} - u_{0}(f_{2} - u_{0})) \\
f_{0} - v_{0}^2 &= u_{0}f_{1} - u_{0}^2 f_{2} + u_{0}^3 \\
v_{0}^2 &= (- u_{0})^3 + f_{2} (- u_{0})^2 + f_{1} (- u_{0}) + f_{0} ,
\end{align*}
which is evidently isomorphic to $C_{1}$. 

Now let $D = P$, $D' = P'$, $(u, v) = (x + u_{0}, v_{0})$ and $(u', v') = (x + u_{0}', v_{0}')$. Following the notation in Lemma \ref{pqlemfstrel} we have $\ep = 1$ and so $a = 1$ and $b = 0$. Hence the interpolation function $p/q$ is of the form $(p_{1}/q_{0}) x + (p_{0} / q_{0})$. The matrix in Lemma \ref{pqlemfstrel} is $1 \times 1$ with a single entry 
\[ \kappa_{0, 0} - \kappa'_{0, 0} = u_{0} - u_{0}' , \]
and the solution vector is also $1 \times 1$ with a single entry 
\[ \lambda_{0, 0} - \lambda_{0, 0}' = v_{0}' - v_{0} . \]
We therefore get, from the formulas in Lemma \ref{pqlem}, that 
\begin{align*}
p_{1} &= v_{0}' - v_{0} \\
q_{0} &= u_{0} - u_{0}' \\
p_{0} &= (v_{0}' - v_{0}) u_{0} - (u_{0} - u_{0}') (-v_{0}) ,
\end{align*}
and hence
\[ \frac{p(x)}{q(x)} = \frac{v_{0}' - v_{0}}{u_{0} - u_{0}'} x + \frac{v_{0}' - v_{0}}{u_{0} - u_{0}'} u_{0} + v_{0} . \]
One easily checks that $p/q$ is a line through the points $(-u_{0}, v_{0})$ and $(-u_{0}', v_{0}')$. 

Continuing with Lemma \ref{udplemma}, we see that 
\begin{align*}
u_{0}'' &= \frac{p_{0}^2 - f_{0} q_{0}^2}{- q_{0}^2 u_{0} u_{0}' }
\end{align*}
A long but straightforward calculation shows that $u_{0}'' = -\lambda^2 + f_{2} - u_{0} - u_{1}$, where $\lambda = (v_{0}' - v_{0})/(u_{0} - u_{0}')$. This agrees with the usual formulas for the elliptic curve group law for a Weirstrass form elliptic curve. Note that $u_{0}''$ is the negative of the usual $x$-coordinate here. Then, applying Lemma \ref{vdplemma} we get that 
\begin{align*}
v_{0}'' &= \frac{\mu_{0}}{q_{0}} \\ 
&= \frac{-p_{0} + p_{1} \kappa''_{0,0}}{q_{0}} \\
&= - \lambda u_{0} + \lambda u''_{0} - v_{0} ,
\end{align*}
which also agrees with the usual formulas.
\end{document}